\newtheorem{theorem}{Theorem}
\newtheorem{lemma}{Lemma}
\newtheorem{proposition}{Proposition}
\newtheorem{problem}{Problem}
\newtheorem{remark}{Remark}
\theoremstyle{plain}
\crefname{theorem}{Theorem}{Theorems}
\Crefname{theorem}{Theorem}{Theorems}
\crefname{lemma}{Lemma}{Lemmas}
\Crefname{lemma}{Lemma}{Lemmas}
\crefname{proposition}{Proposition}{Propositions}
\Crefname{proposition}{Proposition}{Propositions}
\crefname{corollary}{Corollary}{Corollaries}
\Crefname{corollary}{Corollary}{Corollaries}
\crefname{remark}{Remark}{Remarks}
\Crefname{remark}{Remark}{Remarks}
\crefname{assumption}{Assumption}{Assumptions}
\Crefname{assumption}{Assumption}{Assumptions}
\crefname{problem}{Problem}{Problems}
\Crefname{problem}{Problem}{Problems}
\title{Upper Bounds for Digitwise Generating Functions of Powers of Two: A Problem and a Matrix Representation}
\author{Hideaki Noda}
\date{\today}
\begin{document}
\maketitle
\begin{abstract}
This short note studies the asymptotic behavior of a generating function associated with the decimal expansion of \(2^n\).
Our aims are twofold:
(i) to present a problem on the best possible upper bound for this behavior, and
(ii) to introduce a matrix representation that is useful for its analysis.
The representation corresponds to a finite-state transfer operator; analytic and dynamical aspects are not pursued here.
\end{abstract}

\section{Introduction}\label{sec:intro}
The decimal digits of powers of two have a simple yet rich structure.
In particular, the sequence of their last \(n\) decimal digits exhibits periodic and nontrivial patterns.
Studying these statistical properties has long been an interesting problem.
Such digit structures have been investigated in number theory and through harmonic-analytic methods, together with analytic tools related to digit distribution, generating functions, and exponential sums
(see, e.g., Stolarsky~\cite{Stolarsky1977,Stolarsky1978}, Stewart~\cite{Stewart1980}, Mauduit--Rivat~\cite{MauduitRivat2010}, Lagarias~\cite{Lagarias2009}, and references therein).

In this note, we consider the set \(\Omega_n\) of digit sequences corresponding to the last \(n\) decimal digits of powers of two.
For a nonnegative weight function \(h:\{0,1,\dots,9\}\to[0,\infty)\) that is not identically zero, we define the normalized log generating function over the last $n$ decimal digits by
\begin{equation}\label{eq:lgf}
\frac{1}{n}\log \Bigl(
\frac{1}{|\Omega_n|}
\sum_{\omega\in\Omega_n} \prod_{j=1}^n h(\omega_j)
\Bigr).
\end{equation}
(As shown in \Cref{sec:basic}, $|\Omega_n|=4\cdot5^{n-1}$.)
We seek an upper bound of this quantity in the asymptotic sense.
This quantity has a structure similar to that in large deviations theory, and it gives a natural measure for weighted statistics of digits.

To study this problem effectively, we introduce a sparse matrix representation that reflects the structure of the digit set \(\Omega_n\).
This representation provides a linear-algebraic viewpoint that is expected to be useful for discussing upper bounds.
It can also be viewed as a finite-state transfer operator on residue classes modulo \(2^n\),
capturing a weighted preimage structure.
However, the present note confines itself to combinatorial and matrix-theoretic aspects
and does not pursue analytic or dynamical interpretations.

The proposed framework can also be applied to the problem of linear growth of the decimal digit sum of \(2^n\).
Under certain natural assumptions, the framework implies such a linear growth, but the validity of these assumptions remains open.
This conditional result is recorded in Appendix~A as a supplement.

Finally, this short note is based on part of the results obtained in a longer manuscript in preparation~\cite{AuthorPrep}. That work focuses on the probabilistic analysis of the decimal digits of \(2^n\), while the present note concentrates on upper bounds and matrix representations, and gives an independent reconstruction from an algebraic point of view.

\section{Definitions and Basic Properties}\label{sec:basic}
Let $I=\{0,1,2,3,4,5,6,7,8,9\}$, and let $I^n$ be the $n$-fold Cartesian product of $I$.
For $x=(x_1,\dots,x_n)\in I^n$, define
\[
D_n(x)=\sum_{j=1}^n 10^{j-1} x_j .
\]
For example, $x=(7,0,2)$ yields $D_3(x)=207$.
For $n\ge 1$, define
\[
\Omega_n=\big \{ x\in I^n\ \big|\ \text{there exists } k\ge n \text{ such that } D_n(x)\equiv 2^k \pmod{10^n} \big \}.
\]
By Theorem~3.6 in \cite{nathanson2000}, the multiplicative order of $2$ modulo $5^L$ (that is, the smallest $k$ with $2^k\equiv 1 \pmod{5^L}$) is $4\cdot 5^{L-1}$.
Hence $|\Omega_n|=4\cdot 5^{n-1}$. Note that $2^k\bmod10$ cycles through $2,4,8,6$, so each $\omega\in\Omega_n$ has the last digit $\omega_1\in\{2,4,6,8\}$.
For a function $h:I\to\mathbb{C}$, set
\[
E=\sum_{k=0}^4 h(2k),\qquad O=\sum_{k=0}^4 h(2k+1).
\]
\medskip
We now define matrices and vectors.
For $n\ge 1$, define $M_n^{[h]} \in \mathbb{C}^{2^{n-1}\times 2^{n}}$ by the rule
\begin{equation}\label{eq:defMn}
(M_n^{[h]})_{i,t} =
\begin{cases}
h(j), & \text{if there exists } 0\le j\le 9 \text{ such that } t \equiv 10 i + j \pmod{2^n},\\[6pt]
0, & \text{otherwise},
\end{cases}
\end{equation}
for indices $0 \le i < 2^{n-1}$ and $0 \le t < 2^n$.
In other words, in row $i$, the nonzero entries are located at
\[
t \equiv 10 i + j \pmod{2^n}\qquad (0\le j\le 9),
\]
and the value at such a position is $h(j)$.
Thus $M_n^{[h]}$ is a sparse matrix that encodes the transition “append one digit $j$ to the state $i$ to form $10i+j$, and then reduce modulo $2^n$”, with weight $h(j)$.
We show a few examples for small $n$ to illustrate the structure more intuitively.
\[
M_1^{[h]}=\begin{bmatrix}
h(0){+}h(2){+}h(4){+}h(6){+}h(8) & h(1){+}h(3){+}h(5){+}h(7){+}h(9)
\end{bmatrix},
\]
\[
M_2^{[h]}=\begin{bmatrix}
h(0){+}h(4){+}h(8) & h(1){+}h(5){+}h(9) & h(2){+}h(6) & h(3){+}h(7)\\
h(2){+}h(6) & h(3){+}h(7) & h(0){+}h(4){+}h(8) & h(1){+}h(5){+}h(9)
\end{bmatrix},
\]
\[
\begingroup
\setlength{\arraycolsep}{1.6pt}
\renewcommand{\arraystretch}{0.95}
\scalebox{0.93}{$               
M_3^{[h]}=
\left[
\begin{array}{@{}*{8}{c}@{}}
h(0){+}h(8) & h(1){+}h(9) & h(2) & h(3) & h(4) & h(5) & h(6) & h(7)\\
h(6) & h(7) & h(0){+}h(8) & h(1){+}h(9) & h(2) & h(3) & h(4) & h(5)\\
h(4) & h(5) & h(6) & h(7) & h(0){+}h(8) & h(1){+}h(9) & h(2) & h(3)\\
h(2) & h(3) & h(4) & h(5) & h(6) & h(7) & h(0){+}h(8) & h(1){+}h(9)
\end{array}
\right],
$}
\endgroup
\]
\[
\begingroup
\setlength{\arraycolsep}{3.0pt}
\renewcommand{\arraystretch}{0.95}
\scalebox{0.93}{$               
M_4^{[h]} = \left[
\begin{array}{@{}*{16}{c}@{}}
h(0) & h(1) & h(2) & h(3) & h(4) & h(5) & h(6) & h(7) & h(8) & h(9) & 0 & 0 & 0 & 0 & 0 & 0 \\
h(6) & h(7) & h(8) & h(9) & 0 & 0 & 0 & 0 & 0 & 0 & h(0) & h(1) & h(2) & h(3) & h(4) & h(5) \\
0 & 0 & 0 & 0 & h(0) & h(1) & h(2) & h(3) & h(4) & h(5) & h(6) & h(7) & h(8) & h(9) & 0 & 0 \\
h(2) & h(3) & h(4) & h(5) & h(6) & h(7) & h(8) & h(9) & 0 & 0 & 0 & 0 & 0 & 0 & h(0) & h(1) \\
h(8) & h(9) & 0 & 0 & 0 & 0 & 0 & 0 & h(0) & h(1) & h(2) & h(3) & h(4) & h(5) & h(6) & h(7) \\
0 & 0 & h(0) & h(1) & h(2) & h(3) & h(4) & h(5) & h(6) & h(7) & h(8) & h(9) & 0 & 0 & 0 & 0 \\
h(4) & h(5) & h(6) & h(7) & h(8) & h(9) & 0 & 0 & 0 & 0 & 0 & 0 & h(0) & h(1) & h(2) & h(3) \\
0 & 0 & 0 & 0 & 0 & 0 & h(0) & h(1) & h(2) & h(3) & h(4) & h(5) & h(6) & h(7) & h(8) & h(9)
\end{array}
\right].
$}
\endgroup
\]

\begin{lemma}[Parity structure of column sums]\label{lem:colsum}
For $m\ge 1$ the matrix $M_m^{[h]}\in\mathbb{C}^{2^{m-1}\times 2^{m}}$ defined by \eqref{eq:defMn}, the sum of the entries in each column $t$ satisfies
\[
\sum_{i=0}^{2^{m-1}-1} (M_m^{[h]})_{i,t} =
\begin{cases}
E, & \text{if } t \text{ is even},\\[1ex]
O, & \text{if } t \text{ is odd}.
\end{cases}
\]
\end{lemma}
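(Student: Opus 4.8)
The plan is to evaluate the column sum by interchanging the order of summation, so that it becomes a count of how many rows deposit each digit-weight $h(j)$ into column $t$. First I would make explicit what \eqref{eq:defMn} really says: the $(i,t)$ entry is the sum of $h(j)$ over \emph{all} digits $0\le j\le 9$ with $10i+j\equiv t\pmod{2^m}$, since several digits may satisfy the same congruence once $2^m\le 9$ (this is exactly what the collapsed entries such as $h(0){+}h(8)$ in $M_3^{[h]}$ record). Writing the entry this way and summing over $i$, I would swap the two summations to obtain
\[
\sum_{i=0}^{2^{m-1}-1}(M_m^{[h]})_{i,t}=\sum_{j=0}^{9} N_j(t)\,h(j),\qquad N_j(t)=\#\{\,i:0\le i<2^{m-1},\ 10i\equiv t-j\pmod{2^m}\,\}.
\]

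The heart of the argument is to evaluate the counting function $N_j(t)$. Since $10=2\cdot 5$ and $\gcd(10,2^m)=2$ for $m\ge 1$, the congruence $10i\equiv t-j\pmod{2^m}$ is solvable only when $t-j$ is even, i.e. only when $j$ has the same parity as $t$; otherwise $N_j(t)=0$. When $t\equiv j\pmod 2$ I would divide through by $2$ to get $5i\equiv (t-j)/2\pmod{2^{m-1}}$. As $5$ is odd it is a unit modulo $2^{m-1}$, so this congruence has a unique solution modulo $2^{m-1}$, and since $i$ ranges over a complete residue system $\{0,\dots,2^{m-1}-1\}$ there is exactly one admissible $i$. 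Hence $N_j(t)=1$ precisely when $j\equiv t\pmod 2$, and $N_j(t)=0$ otherwise.

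Combining the two steps, the column sum collapses to $\sum_{j\equiv t\,(\mathrm{mod}\,2)}h(j)$, the sum of $h(j)$ over digits $j$ of the same parity as $t$. For even $t$ this is $h(0)+h(2)+h(4)+h(6)+h(8)=E$, and for odd $t$ it is $h(1)+h(3)+h(5)+h(7)+h(9)=O$, which is the asserted dichotomy.

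I expect the only delicate point to be the solution count in the restricted index range. One must use both facts in tandem: that $\gcd(10,2^m)=2$ enforces the parity condition on $j$, and that after halving, the odd coefficient $5$ is invertible modulo the halved modulus $2^{m-1}$, so that $2^{m-1}$ matches exactly the number of available rows and produces a single contributing row rather than two or none. Everything else—the entry rewriting and the final parity bookkeeping—is routine.
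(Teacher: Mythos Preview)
Your proof is correct and follows essentially the same route as the paper: both reduce the congruence $10i\equiv t-j\pmod{2^m}$ to $5i\equiv(t-j)/2\pmod{2^{m-1}}$, use the invertibility of $5$ modulo $2^{m-1}$ to get a unique row $i$ for each $j$ of the correct parity, and conclude that the column sum is $E$ or $O$ accordingly. Your version is in fact slightly more careful in that you make explicit that for small $m$ a single entry may already be a sum $\sum_{j} h(j)$ over several digits, so that swapping the $i$- and $j$-sums is the honest way to count contributions; the paper's proof glosses over this but reaches the same conclusion.
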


\begin{proof}
Fix $t$.
The condition $(M_m^{[h]})_{i,t}\neq 0$ holds if and only if there exists $j \in I$ such that $t \equiv 10i+j \pmod{2^m}$.
This is equivalent to
\[
5i \equiv \frac{t-j}{2} \pmod{2^{m-1}}.
\]
Since $\gcd(5,2^{m-1})=1$, there is a unique $i$ modulo $2^{m-1}$ if and only if the right-hand side is an integer, i.e., if and only if $t-j$ is even.
Hence the nonzero entries in column $t$ correspond exactly to the five digits $j$ that have the same parity as $t$, and their values are $h(j)$.
Therefore the column sum is $E$ when $t$ is even, and $O$ when $t$ is odd.
\end{proof}

\medskip
Let $m\ge 2$.
For each $i=1,2,3,4$, define
\begin{equation}\label{def_notation}
r_i^{(m)} = \Bigl\lfloor \frac{i 2^{m}}{5}\Bigr\rfloor,\qquad
u_i^{(m)} = (i 2^{m}) \bmod 5 \in \{1,2,3,4\},\qquad
\delta_i^{(m)} = 2 u_i^{(m)}.
\end{equation}
Then $u_i^{(m)}$ are pairwise distinct for $i=1,2,3,4$, and they take all values in $\{1,2,3,4\}$.
This matches the fact that $2^{m}\bmod 5$ cycles with period $4$ through $2,4,3,1$.
As a result,
\[
\bigl(\delta_1^{(m)},\delta_2^{(m)},\delta_3^{(m)},\delta_4^{(m)}\bigr)=
\begin{cases}
(2,4,6,8), & \text{if } m\equiv 0\pmod{4},\\
(4,8,2,6), & \text{if } m\equiv 1\pmod{4},\\
(8,6,4,2), & \text{if } m\equiv 2\pmod{4},\\
(6,2,8,4), & \text{if } m\equiv 3\pmod{4}.
\end{cases}
\]

\begin{lemma}[Four solutions to the compatibility congruence]\label{lem:B}
Let $0 \le r < 2^{m}$ and $\delta\in\{2,4,6,8\}$.
Consider the congruence
\[
10r+\delta\equiv 0 \pmod{2^{m+1}}.
\]
The set of all solutions $(r,\delta)$ is exactly $\{(r_i^{(m)},\delta_i^{(m)})\}_{i=1}^4$.
\end{lemma}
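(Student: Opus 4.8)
The plan is to reduce the congruence modulo $2^{m+1}$ to an equivalent one modulo $2^m$ and then exploit that $5$ is invertible there. Since $\delta\in\{2,4,6,8\}$ is even, I would write $\delta=2s$ with $s\in\{1,2,3,4\}$, so that $10r+\delta=2(5r+s)$. The condition $2(5r+s)\equiv 0\pmod{2^{m+1}}$ is then equivalent to
\[
5r+s\equiv 0 \pmod{2^m},\qquad\text{i.e.}\qquad 5r\equiv -s \pmod{2^m}.
\]
Because $\gcd(5,2^m)=1$, the residue $5$ is invertible modulo $2^m$, so for each fixed $s$ there is a unique $r$ with $0\le r<2^m$ solving this. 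As $\delta$ ranges over $\{2,4,6,8\}$, the value $s$ ranges over $\{1,2,3,4\}$, and hence the solution set consists of exactly four pairs $(r,\delta)$, one for each $\delta$. This pins down the cardinality of the solution set, so it remains only to identify the explicit list.

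Next I would verify that each claimed pair $(r_i^{(m)},\delta_i^{(m)})$ is a solution. The key identity is the Euclidean division of $i\,2^m$ by $5$: by the definitions $r_i^{(m)}=\lfloor i\,2^m/5\rfloor$ and $u_i^{(m)}=(i\,2^m)\bmod 5$ in \eqref{def_notation} we have $i\,2^m=5\,r_i^{(m)}+u_i^{(m)}$. Since $\delta_i^{(m)}=2u_i^{(m)}$, this gives
\[
10\,r_i^{(m)}+\delta_i^{(m)}=2\bigl(5\,r_i^{(m)}+u_i^{(m)}\bigr)=2\cdot i\,2^m=i\cdot 2^{m+1}\equiv 0 \pmod{2^{m+1}}.
\]
I would also check the range constraint: for $1\le i\le 4$ we have $0\le i\,2^m/5<4\cdot 2^m/5<2^m$, so $0\le r_i^{(m)}<2^m$, and $\delta_i^{(m)}\in\{2,4,6,8\}$ since $u_i^{(m)}\in\{1,2,3,4\}$. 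Hence each of the four listed pairs is a genuine solution of the congruence in the required range.

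Finally, I would combine the two halves. The remark preceding the lemma, that $u_1^{(m)},\dots,u_4^{(m)}$ are pairwise distinct and exhaust $\{1,2,3,4\}$, shows that the four values $\delta_i^{(m)}$ are pairwise distinct and exhaust $\{2,4,6,8\}$. Since the solution set contains exactly one element for each $\delta\in\{2,4,6,8\}$, the four pairs $(r_i^{(m)},\delta_i^{(m)})$ must be all of them, which is the claim. There is no serious obstacle here; the only point requiring care is the bookkeeping of the bijection $\delta\leftrightarrow s\leftrightarrow u_i^{(m)}$, so that the explicitly indexed list is matched against the abstractly counted four solutions without omission or double counting.
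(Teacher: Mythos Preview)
Your proof is correct and follows essentially the same route as the paper: both reduce the congruence to $5r\equiv -\delta/2\pmod{2^m}$, invoke the invertibility of $5$ to get a unique $r$ for each $\delta$, and then verify directly that the four pairs $(r_i^{(m)},\delta_i^{(m)})$ are solutions via the identity $5r_i^{(m)}+u_i^{(m)}=i\,2^m$. Your write-up is in fact slightly more explicit than the paper's in checking the range $0\le r_i^{(m)}<2^m$ and in spelling out why the four listed pairs exhaust the solution set (one tiny slip: the inequality $i\,2^m/5<4\cdot 2^m/5$ should be $\le$ when $i=4$, but this does not affect the conclusion).
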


\begin{proof}
By definition we have $5 r_i^{(m)} + u_i^{(m)} = i 2^{m}$, so $(r_i^{(m)},\delta_i^{(m)})$ satisfies the congruence.
Fix $\delta\in\{2,4,6,8\}$.
The congruence $10r+\delta \equiv 0 \pmod{2^{m+1}}$ is equivalent to
\[
5r \equiv -\frac{\delta}{2} \pmod{2^{m}}.
\]
Since $\gcd(5,2^{m})=1$, this has a unique solution for $r$ modulo $2^{m}$.
For each $\delta$, this unique solution equals some $r_i^{(m)}$, and the corresponding $\delta_i^{(m)}$ equals $\delta$.
Hence the solution set is $\{(r_i^{(m)},\delta_i^{(m)})\}_{i=1}^4$.
\end{proof}

By Lemma~\ref{lem:B}, the values $\{\delta_i^{(m)}\}_{i=1}^4$ are exactly $\{2,4,6,8\}$ without repetition.
For $m\ge 2$ and a function $h:I\to\mathbb{C}$, define a column vector $b^{m,[h]}\in\mathbb{C}^{2^{m}}$ by
\[
b^{m,[h]}_r =
\begin{cases}
h(\delta_i^{(m)}), & \text{if } r = r_i^{(m)},\\[4pt]
0, & \text{otherwise}.
\end{cases}
\]

For $m\ge 1$ and functions $h_1,\dots,h_m:I\to\mathbb{C}$, define a row vector
$v^{m,[h_1,\dots,h_m]} \in \mathbb{C}^{1\times 2^{m}}$ by
\[
v_t^{m,[h_1,\dots,h_m]}
=\sum_{\substack{x\in I^m\\ D_m(x)\equiv t \  (\mathrm{mod}\ 2^m)}}
\ \prod_{j=1}^m h_j(x_j),\qquad 0\le t<2^m.
\]
In particular, when $h_1=\cdots=h_m=h$, we write $v_t^{m,[h]}$.
Equivalently, \(v^{m,[h_1,\dots,h_m]}\) may be viewed as a weighted residue-count vector modulo \(2^m\):  
each coordinate \(v_t^{m,[h_1,\dots,h_m]}\) represents the total weight of all \(m\)-digit decimal words \(x\in I^m\)  
satisfying \(D_m(x)\equiv t\pmod{2^m}\).

Finally, for a nonnegative function $h:I\to[0,\infty)$ that is not identically zero, define the normalized log generating function over the last $n$ decimal digits by
\[
  \Psi_n(h)
  = \frac{1}{n}\log \Bigl(
    \frac{1}{|\Omega_n|}
    \sum_{\omega\in\Omega_n}\prod_{j=1}^n h(\omega_j)
  \Bigr).
\]

\section{Matrix Representation}\label{sec:matrix}

In this section we give a matrix representation of $\sum_{\omega\in\Omega_n}\ \prod_{j=1}^n h_j(\omega_j)$.

\begin{lemma}[A recursion by adding the first digit]\label{lem:A}
Let $m\ge 2$. Let $h_1,\dots,h_m:I\to\mathbb{C}$. Then
\[
v^{m,[h_1,\dots,h_m]} = v^{m-1,[h_2,\dots,h_m]}  M_m^{[h_1]}.
\]
In particular, if all weights are the same function $h:I\to\mathbb{C}$, then
\[
v^{m,[h]} = v^{m-1,[h]}  M_m^{[h]}.
\]
For $m=1$ we have $v^{1,[h]} = M_1^{[h]}$.
\end{lemma}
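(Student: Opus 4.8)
The plan is to prove the general identity $v^{m,[h_1,\dots,h_m]} = v^{m-1,[h_2,\dots,h_m]} M_m^{[h_1]}$ first, then read off the equal-weight case by setting $h_1=\cdots=h_m=h$, and to dispose of $m=1$ separately. The driving idea is to peel off the least significant digit $x_1$. Writing $y=(x_2,\dots,x_m)\in I^{m-1}$, the definition of $D_m$ factors as
\[
D_m(x) = x_1 + 10\,D_{m-1}(y),
\]
because $D_{m-1}(y)=\sum_{j=2}^m 10^{j-2}x_j$. This invites the pairing $x\leftrightarrow(x_1,y)$, under which the weight splits as $\prod_{k=1}^m h_k(x_k) = h_1(x_1)\prod_{\ell=1}^{m-1}h_{\ell+1}(y_\ell)$, i.e.\ the factor attached to $y$ in $v^{m-1,[h_2,\dots,h_m]}$ times the single factor $h_1$ carried by $M_m^{[h_1]}$.

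The crux is a compatibility statement reconciling the two moduli $2^{m-1}$ and $2^m$. Setting $i=D_{m-1}(y)\bmod 2^{m-1}$, I would show that $D_m(x)\equiv t\pmod{2^m}$ holds if and only if $10i+x_1\equiv t\pmod{2^m}$. Indeed, writing $D_{m-1}(y)=i+2^{m-1}q$ gives
\[
10\,D_{m-1}(y) = 10i + 10\cdot 2^{m-1} q = 10i + 5\cdot 2^{m} q \equiv 10i \pmod{2^m};
\]
the extra factor $2$ in $10$ is exactly what promotes a reduction modulo $2^{m-1}$ to a legitimate reduction modulo $2^m$ after scaling. Consequently the congruence constraint on $x$ depends on $y$ only through $i$, and it matches the support condition $t\equiv 10i+j\pmod{2^m}$ in the definition of $M_m^{[h_1]}$ --- read, as the worked examples confirm, as the sum of $h_1(j)$ over all $j\in\{0,\dots,9\}$ satisfying that congruence (there may be several such $j$ when $2^m\le 9$).

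It then remains to assemble the two sides. Expanding the matrix product coordinatewise,
\[
\bigl(v^{m-1,[h_2,\dots,h_m]}\,M_m^{[h_1]}\bigr)_t
= \sum_{i=0}^{2^{m-1}-1} v_i^{m-1,[h_2,\dots,h_m]}\!\!\sum_{\substack{0\le j\le 9\\ 10i+j\equiv t\ (\mathrm{mod}\ 2^m)}}\!\! h_1(j),
\]
and substituting the definition of $v_i^{m-1}$ turns the right-hand side into a triple sum over $i$, over $y\in I^{m-1}$ with $D_{m-1}(y)\equiv i\pmod{2^{m-1}}$, and over the admissible digits $j$. By the compatibility step this is precisely the partition, according to $i=D_{m-1}(y)\bmod 2^{m-1}$, of the single sum defining $v_t^{m,[h_1,\dots,h_m]}$ over all $x\in I^m$ with $D_m(x)\equiv t\pmod{2^m}$, via $x\leftrightarrow(j,y)$ with $j=x_1$; hence the two vectors agree in every coordinate. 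The equal-weight identity is the specialization $h_1=\cdots=h_m=h$, and for $m=1$ one checks directly that $v^{1,[h]}_t=\sum_{x_1\equiv t\,(\mathrm{mod}\,2)}h(x_1)$ equals the lone row of $M_1^{[h]}$ ($E$ for $t=0$, $O$ for $t=1$). The only genuine subtlety is the modulus-compatibility step; the rest is bookkeeping of the change of variables and of reading the summed entries of $M_m^{[h_1]}$ correctly.
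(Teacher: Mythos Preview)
Your proof is correct and follows essentially the same route as the paper's: peel off the least significant digit $x_1$, use $D_m(x)=x_1+10\,D_{m-1}(x')$ for $x'=(x_2,\dots,x_m)$, pass to the residue $i=D_{m-1}(x')\bmod 2^{m-1}$, and identify the resulting double sum with the matrix product. The only difference is that you spell out the modulus-compatibility step $10(i+2^{m-1}q)\equiv 10i\pmod{2^m}$ explicitly, whereas the paper simply asserts it.
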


\begin{proof}
Let $m\ge 2$.  
Take any $x=(x_1,\dots,x_m)\in I^m$. Write $x'=(x_2,\dots,x_m)\in I^{m-1}$ and $j=x_1\in I$, so $x=(j,x')$.  
Then $D_m(x)=10 D_{m-1}(x')+j$.
Let $0 \le t < 2^{m-1}$ be such that $t\equiv D_{m-1}(x')\pmod{2^{m-1}}$.  
Then $D_m(x)\equiv 10t+j \pmod{2^m}$.
Hence, for $0 \le s < 2^m$,
\begin{align*}
v^{m,[h_1,\dots,h_m]}_s
&=\sum_{j\in I}
\sum_{\substack{x'\in I^{m-1}\\ 10D_{m-1}(x')+j\equiv s\ (\mathrm{mod}\ 2^m)}}
h_1(j) \prod_{k=2}^m h_k(x_k) \\
&=\sum_{0\le t<2^{m-1}}
v_t^{m-1,[h_2,\dots,h_m]}\;
\sum_{\substack{j\in I\\ s\equiv 10t+j\ (\mathrm{mod}\ 2^m)}}
h_1(j).
\end{align*}
This is exactly the matrix product $v^{m-1,[h_2,\dots,h_m]} M_m^{[h_1]}$.  
Also, $v^{1,[h]}=M_1^{[h]}$ follows directly from the definition.
\end{proof}

\begin{proposition}[Matrix representation]\label{prop:main}
Let $n\ge 3$ and $h_1,\dots,h_n:I\to\mathbb{C}$. Then
\[
\sum_{\omega\in\Omega_n}\ \prod_{j=1}^n h_j(\omega_j)
=
M_1^{[h_n]}   M_2^{[h_{n-1}]} \cdots M_{n-1}^{[h_2]}   b^{n-1,[h_1]}.
\]
\end{proposition}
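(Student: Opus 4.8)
The plan is to read the truncated matrix product as a weighted residue-count vector modulo $2^{n-1}$, attach the leading digit through the vector $b^{n-1,[h_1]}$, and then match the resulting index set to $\Omega_n$ by an elementary divisibility characterization.

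First I would iterate \Cref{lem:A}. Applying it to $v^{n-1,[h_2,\dots,h_n]}$ and peeling off the weights $h_2,\dots,h_n$ one at a time — reducing the superscript from $n-1$ down to $1$ and using the base case $v^{1,[h_n]}=M_1^{[h_n]}$ — yields the identity
\[
v^{n-1,[h_2,\dots,h_n]} = M_1^{[h_n]} M_2^{[h_{n-1}]}\cdots M_{n-1}^{[h_2]}.
\]
Consequently the right-hand side of the proposition is the scalar $v^{n-1,[h_2,\dots,h_n]}\,b^{n-1,[h_1]}$. Since $b^{n-1,[h_1]}$ is supported on the four indices $r_i^{(n-1)}$ with value $h_1(\delta_i^{(n-1)})$ there, this inner product collapses to $\sum_{i=1}^4 h_1(\delta_i^{(n-1)})\,v^{n-1,[h_2,\dots,h_n]}_{r_i^{(n-1)}}$.

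Next I would expand each surviving coordinate of $v^{n-1,[h_2,\dots,h_n]}$ by its definition as a sum over tails $\omega'\in I^{n-1}$ with $D_{n-1}(\omega')\equiv r_i^{(n-1)}\pmod{2^{n-1}}$, and glue the leading digit $\delta_i^{(n-1)}$ back on to form the word $\omega=(\delta_i^{(n-1)},\omega'_1,\dots,\omega'_{n-1})\in I^n$. The product of weights is then exactly $\prod_{j=1}^n h_j(\omega_j)$, so the scalar becomes a sum of these products over all words $\omega$ whose leading digit is one of the $\delta_i^{(n-1)}$ and whose tail lies in the prescribed residue class. Using $D_n(\omega)=\omega_1+10D_{n-1}(\omega')$ together with the observation that $10D_{n-1}(\omega')\bmod 2^n$ depends only on $D_{n-1}(\omega')\bmod 2^{n-1}$, \Cref{lem:B} (applied with $m=n-1$, so that its modulus is $2^n$) tells me that ranging over the pairs $(\delta_i^{(n-1)},r_i^{(n-1)})$ is the same as ranging over all $\omega\in I^n$ with $\omega_1\in\{2,4,6,8\}$ and $D_n(\omega)\equiv 0\pmod{2^n}$.

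The remaining, and I expect the only genuinely delicate, step is to identify that last set with $\Omega_n$ itself, i.e.\ to show
\[
\Omega_n=\bigl\{\omega\in I^n : D_n(\omega)\equiv 0\ (\mathrm{mod}\ 2^n),\ \omega_1\in\{2,4,6,8\}\bigr\}.
\]
The inclusion $\subseteq$ is immediate: if $D_n(\omega)\equiv 2^k\pmod{10^n}$ with $k\ge n$, then reducing modulo $2^n$ gives $D_n(\omega)\equiv 0$, and the last-digit cycle $2,4,8,6$ forces $\omega_1\in\{2,4,6,8\}$. For the reverse inclusion one either invokes that $2$ is a primitive root modulo $5^n$ (so its powers exhaust the units, while the leading-digit condition guarantees $\gcd(D_n(\omega),5)=1$) and reassembles a suitable exponent $k\ge n$ via the Chinese remainder theorem, or more cheaply counts: the displayed set has exactly $4\cdot 5^{n-1}$ elements, which already equals $|\Omega_n|$, so the one-sided inclusion forces equality. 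This is the step where the argument leaves the purely linear-algebraic formalism and uses the arithmetic of powers of two; everything else is bookkeeping driven by \Cref{lem:A} and \Cref{lem:B}.
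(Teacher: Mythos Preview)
Your proof is correct and follows the paper's own argument almost step for step: iterate \Cref{lem:A} to identify the matrix product with $v^{n-1,[h_2,\dots,h_n]}$, pair with $b^{n-1,[h_1]}$, use \Cref{lem:B} to rewrite the index set as $\{\omega\in I^n:\omega_1\in\{2,4,6,8\},\ D_n(\omega)\equiv 0\pmod{2^n}\}$, and then identify that set with $\Omega_n$. The only variation is that the paper establishes the reverse inclusion $\supseteq$ solely via the primitive-root argument, whereas you also offer the clean counting shortcut ($4\cdot 5^{n-1}$ elements on each side), which is a nice simplification.
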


\begin{proof}
Let the left-hand side be $L$ and the right-hand side be $R$.  
For simplicity, write $r_i=r_i^{(n-1)}$ and $\delta_i=\delta_i^{(n-1)}$ as defined in \eqref{def_notation}.  
By Lemma~\ref{lem:A},
\[
M_1^{[h_n]} M_2^{[h_{n-1}]} \cdots M_{n-1}^{[h_2]}
= v^{n-1,[h_2,\dots,h_n]}.
\]
From the definitions of $v^{n-1,[h_2,\dots,h_n]}$ and $b^{n-1,[h_1]}$ we get
\begin{align*}
v^{n-1,[h_2,\dots,h_n]}  b^{n-1,[h_1]}
&= \sum_{i=1}^4 v_{r_i}^{n-1,[h_2,\dots,h_n]} h_1(\delta_i) \\
&= \sum_{i=1}^4 h_1(\delta_i)
   \sum_{\substack{x=(x_2,\dots,x_n)\in I^{n-1}\\ D_{n-1}(x)\equiv r_i\ (\mathrm{mod}\ 2^{n-1})}}
   \prod_{j=2}^n h_j(x_j).
\end{align*}

Now consider each inner sum.  
Let $\omega=(\delta_i,x_2,\dots,x_n)\in I^n$.  
By Lemma~\ref{lem:B},
\[
D_n(\omega)\equiv 10D_{n-1}(x)+\delta_i \equiv 10r_i+\delta_i \equiv 0 \pmod{2^n}.
\]
Here $\delta_i\in\{2,4,6,8\}$, so $\gcd(\delta_i,5)=1$.  
Thus $D_n(\omega)$ is coprime to $5$ and is an element of $(\mathbb{Z}/5^n\mathbb{Z})^\times$.  
Since $2$ is a primitive root modulo $5^n$ (see, e.g., Theorem 3.7 in \cite{nathanson2000}), there exists $m\in\mathbb{N}$ such that
$2^{m}\equiv D_n(\omega)\pmod{5^n}$.  
By adding a multiple of $\varphi(5^n)=4\cdot 5^{n-1}$ to the exponent, we can take $m_i\ge n$.  
Because $\gcd(5^n,2^n)=1$, we then have $D_n(\omega)\equiv 2^{m_i}\pmod{10^n}$.
Hence $\omega\in\Omega_n$, and so $L\ge R$.

Conversely, let $\omega=(\omega_1,\dots,\omega_n)\in\Omega_n$.  
Then $D_n(\omega)\equiv 0 \pmod{2^n}$, so $\omega_1\in\{2,4,6,8\}$.  
By Lemma~\ref{lem:B}, the congruence $10t+\omega_1\equiv 0 \pmod{2^n}$ has a unique solution $t=r_i$ for some $i$.  
Let $\tilde{x}=(\omega_2,\dots,\omega_n)$. Then $D_{n-1}(\tilde{x})\equiv r_i \pmod{2^{n-1}}$.  
Thus this $\omega$ appears exactly once in the $i$-th term of the double sum above.  
Therefore $L\le R$, and we conclude $L=R$.
\end{proof}

\begin{remark}[Finite-state preimage transfer operator]
For each $m$, the matrix $M_m^{[h]}$ defines a finite-state preimage transfer operator
\(T_m^{[h]} \colon \mathbb{C}^{2^{m}}\to\mathbb{C}^{2^{m-1}}\) on functions
\(f:\mathbb{Z}/2^{m}\mathbb{Z}\to\mathbb{C}\), defined by
\[
(T_m^{[h]} f)(i)=\sum_{j=0}^9 h(j)\, f\bigl((10i+j)\bmod 2^{m}\bigr),
\]
where \(i\in \mathbb{Z}/2^{m-1}\mathbb{Z}\).
In terms of matrices, \(T_m^{[h]}\) corresponds to the transpose of \(M_m^{[h]}\),
since the main text adopts the row-vector convention.
Analytic and dynamical aspects are not pursued further in this note.
\end{remark}

\section{\texorpdfstring{An Asymptotic Upper Bound for $\Psi_n(h)$}{An Asymptotic Upper Bound for Psi\_n(h)}}\label{sec:ldp}
In this section we study an asymptotic upper bound for $\Psi_n(h)$.
Throughout this section, unless otherwise stated, we consider a single nonnegative weight function $h:I\to[0,\infty)$ that is not identically zero.  
To simplify notation, we write $v^m = v^{m,[h]}$ and $M_m = M_m^{[h]}$ whenever no confusion arises.

\begin{proposition}[General upper bound using the 1-norm]\label{prop:1norm}
For any $h:I\to[0,\infty)$, we have
\[
\limsup_{n\to\infty}\Psi_n(h)
\ \le\
\log \Bigl(\frac{\max\{E,O\}}{5}\Bigr).
\]
\end{proposition}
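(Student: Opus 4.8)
The plan is to combine the matrix representation of \Cref{prop:main} with the column-sum structure of \Cref{lem:colsum}, estimating everything in the 1-norm. Write $S_n=\sum_{\omega\in\Omega_n}\prod_{j=1}^n h(\omega_j)$. For $n\ge 3$, \Cref{prop:main} with all weights equal to $h$ gives $S_n=M_1 M_2\cdots M_{n-1}\,b^{n-1}$, a single nonnegative real number. So it suffices to bound this matrix--vector product from above and then feed it into $\Psi_n(h)=\frac1n\log\!\big(S_n/|\Omega_n|\big)$ using $|\Omega_n|=4\cdot5^{n-1}$.

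First I would record the key one-step estimate. For the vector 1-norm $\|\cdot\|_1$, the induced matrix 1-norm of $M_m$ is its maximum absolute column sum. By \Cref{lem:colsum} every column sum of $M_m$ equals $E$ or $O$, and all entries are nonnegative, so this maximum is exactly $\max\{E,O\}$; that is, $\|M_m w\|_1\le\max\{E,O\}\,\|w\|_1$ for every column vector $w$ and every $m\ge1$. I would prove this inline by the chain $\|M_m w\|_1\le\sum_t\big(\sum_i (M_m)_{i,t}\big)|w_t|=\sum_t(\mathrm{colsum}_t)\,|w_t|\le\max\{E,O\}\,\|w\|_1$, where the first step is the triangle inequality together with nonnegativity of the entries. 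Applying this $n-1$ times along the product $M_1\cdots M_{n-1}b^{n-1}$ (equivalently, by submultiplicativity of the induced norm) yields $S_n=\|M_1\cdots M_{n-1}b^{n-1}\|_1\le\max\{E,O\}^{\,n-1}\,\|b^{n-1}\|_1$. The terminal factor is $\|b^{n-1}\|_1=h(2)+h(4)+h(6)+h(8)=:B$, since by \Cref{lem:B} the vector $b^{n-1}$ has exactly the four nonzero entries $h(\delta_i^{(n-1)})$ with $\{\delta_i^{(n-1)}\}=\{2,4,6,8\}$; crucially $B$ is a constant independent of $n$. Hence $S_n/|\Omega_n|\le \frac{B}{4}\bigl(\max\{E,O\}/5\bigr)^{n-1}$, so $\Psi_n(h)\le \frac1n\log\frac{B}{4}+\frac{n-1}{n}\log\frac{\max\{E,O\}}{5}$, and letting $n\to\infty$ gives $\limsup_n\Psi_n(h)\le\log(\max\{E,O\}/5)$. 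The degenerate case $B=0$ (i.e.\ $h(2)=h(4)=h(6)=h(8)=0$) forces $b^{n-1}=0$, hence $S_n=0$ and $\Psi_n(h)=-\infty$, so the bound is trivial there; note also $\max\{E,O\}>0$ because $h$ is not identically zero.

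The main obstacle is conceptual rather than computational: one must estimate \emph{along columns rather than rows}. A naive bound on the total mass, using that each row of $M_m$ sums to $h(0)+\cdots+h(9)=E+O$, only yields the weaker exponent $E+O$; the resulting bound $\log((E+O)/5)$ is not even tight, since for $h\equiv1$ it gives $\log2$ instead of $0$. The improvement to $\max\{E,O\}$ is precisely what the 1-norm / column-sum viewpoint of \Cref{lem:colsum} supplies, and it is sharp for $h\equiv1$, where $S_n=|\Omega_n|$ and $\Psi_n\equiv0$. The only routine bookkeeping is verifying that the terminal 1-norm $\|b^{n-1}\|_1$ is a fixed constant, which is immediate from \Cref{lem:B}.
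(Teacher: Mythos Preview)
Your proof is correct and follows essentially the same approach as the paper: both use \Cref{prop:main} for the matrix representation, \Cref{lem:colsum} to identify the induced $1$-norm of each $M_m$ as $\max\{E,O\}$, submultiplicativity to telescope the bound, and the observation that $\|b^{n-1}\|_1=h(2)+h(4)+h(6)+h(8)$ is independent of $n$. The only cosmetic difference is that the paper iterates from the left via the row vectors $v^m=v^{m-1}M_m$ (using the induced matrix $1$-norm convention, so that $\|v^1\|_1=\max\{E,O\}$), whereas you iterate from the right by acting on the column vector $b^{n-1}$; the resulting inequality $S_n\le(h(2)+h(4)+h(6)+h(8))\max\{E,O\}^{n-1}$ is identical.
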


\begin{proof}
In this proof, the notation $\|\cdot\|_1$ denotes the \emph{induced matrix $1$-norm},
that is, the maximum column sum norm.
When a row or column vector is regarded as a $1\times N$ or $N\times1$ matrix,
the same convention applies; in particular, for a scalar $a\ge0$ we have $\|a\|_1 = a$.
Let $S = \max\{E,O\}$.  
By Lemma~\ref{lem:colsum}, $\|M_m\|_1 = S$ for every $m$.  
Using the submultiplicativity of the $1$-norm, we obtain
\[
\|v^m\|_1 = \|v^{m-1}M_m\|_1
 \le \|v^{m-1}\|_1 \|M_m\|_1
 \le \cdots \le \|v^1\|_1 S^{m-1}.
\]
Since $v^1 = M_1 = [E\ \ O]$, we have $\|v^1\|_1 = \max\{E,O\} = S$.  
Therefore,
\[
\|v^{n-1}\|_1 \le S^{n-1}.
\]
Note that, by the definition of $b^{n-1}$, we have $\|b^{n-1}\|_1 = h(2)+h(4)+h(6)+h(8)$.  
From Proposition~\ref{prop:main}, we obtain
\[
\sum_{\omega\in\Omega_n}\prod_{j=1}^n h(\omega_j)
=\| v^{n-1}b^{n-1} \|_1
\le 
\|v^{n-1}\|_1 \|b^{n-1}\|_1
 \le
(h(2)+h(4)+h(6)+h(8)) S^{n-1}.
\]
This inequality immediately implies the claim.
\end{proof}

\begin{proposition}[The case $E=O$]\label{prop:equalEO}
Assume that the weight function $h:I\to[0,\infty)$ satisfies $E=O$.  
Then
\[
\lim_{n\to\infty}\Psi_n(h)
=\log \Big( \frac{E}{5} \Big) = \log \Big(\frac{1}{10}\sum_{j=0}^{9}h(j)\Big).
\]
\end{proposition}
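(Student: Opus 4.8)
The plan is to exploit the hypothesis $E=O$ to show that the row vector $v^{m}=v^{m,[h]}$ is \emph{exactly uniform}, and then to read off a closed form for $\sum_{\omega\in\Omega_n}\prod_{j=1}^n h(\omega_j)$ from \Cref{prop:main}, after which the limit is immediate (in fact sharper than what \Cref{prop:1norm} already gives). Concretely, I claim that when $E=O$ one has
\[
v^{m}=E^{m}\,\mathbf 1\qquad(m\ge 1),
\]
where $\mathbf 1$ denotes the row vector in $\mathbb{C}^{1\times 2^{m}}$ all of whose entries equal $1$.

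I would prove the claim by induction on $m$. For the base case, $v^{1}=M_1^{[h]}=[\,E\ \ O\,]=E\,\mathbf 1$ because $E=O$. For the inductive step, \Cref{lem:A} gives $v^{m}=v^{m-1}M_m$; using the induction hypothesis $v^{m-1}=E^{m-1}\mathbf 1$ (with $\mathbf 1$ now the length-$2^{m-1}$ row of ones) yields $v^{m}=E^{m-1}\,(\mathbf 1\,M_m)$. But $\mathbf 1\,M_m$ is precisely the row of column sums of $M_m$, which by \Cref{lem:colsum} equals $E$ in every even column and $O$ in every odd column; under $E=O$ all of these coincide with $E$, so $\mathbf 1\,M_m=E\,\mathbf 1$ and hence $v^{m}=E^{m}\,\mathbf 1$. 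This closes the induction, and the whole content of the proposition really sits in this single observation that $E=O$ equalizes the column sums of \Cref{lem:colsum}.

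Granting the claim, I would substitute into \Cref{prop:main}. Since $b^{n-1,[h]}$ is supported on the four indices $r_i^{(n-1)}$ with values $h(\delta_i^{(n-1)})$, and $\{\delta_i^{(n-1)}\}_{i=1}^4=\{2,4,6,8\}$ by \Cref{lem:B}, uniformity of $v^{n-1}$ gives
\[
\sum_{\omega\in\Omega_n}\prod_{j=1}^n h(\omega_j)=v^{n-1}b^{n-1}=E^{n-1}\bigl(h(2)+h(4)+h(6)+h(8)\bigr).
\]
Dividing by $|\Omega_n|=4\cdot 5^{n-1}$ and applying $\tfrac1n\log(\cdot)$, the factors $h(2)+h(4)+h(6)+h(8)$ and $4$ contribute a term of order $1/n\to 0$, while the dominant part is $\tfrac{n-1}{n}\log(E/5)\to\log(E/5)$. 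This equals $\log\bigl(\tfrac1{10}\sum_j h(j)\bigr)$ because $\sum_j h(j)=E+O=2E$ under $E=O$, and it is consistent with the upper bound of \Cref{prop:1norm}.

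The one point I would flag is the degenerate possibility $h(2)+h(4)+h(6)+h(8)=0$ (which is compatible with $E=O$ and $h\not\equiv 0$, e.g.\ $h$ supported on $\{0\}$ and the odd digits). Since every $\omega\in\Omega_n$ has $\omega_1\in\{2,4,6,8\}$, each product then vanishes, the sum is identically $0$, and $\Psi_n(h)\equiv-\infty$; so the stated finite limit presumes the mild positivity $h(2)+h(4)+h(6)+h(8)>0$. Away from this edge case the computation above is entirely routine, so there is no genuine analytic obstacle — the only thing one must \emph{see} is the exact uniformity, after which everything reduces to the elementary limit just described.
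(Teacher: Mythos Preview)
Your proposal is correct and follows essentially the same approach as the paper: both arguments observe that $E=O$ forces all column sums of $M_m$ to equal $E$ (via \Cref{lem:colsum}), deduce by induction through \Cref{lem:A} that $v^{m}=E^{m}\mathbf 1$, and then read off the closed form $E^{n-1}\bigl(h(2)+h(4)+h(6)+h(8)\bigr)$ from \Cref{prop:main}. Your additional remark on the degenerate case $h(2)+h(4)+h(6)+h(8)=0$ is a useful caveat that the paper's proof omits.
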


\begin{proof}
By Lemma~\ref{lem:colsum}, each matrix $M_n$ has all column sums equal to $E$.  
The vector $v^1$ has all entries equal to $E$, and multiplying a uniform vector by $M_{m+1}$ again gives a uniform vector.  
Hence, by Lemma~\ref{lem:A}, we have
\[
v^m = (E^m,\dots,E^m)\in\mathbb{R}^{1\times 2^m}\quad (m\ge1).
\]
For $n\ge2$, Proposition~\ref{prop:main} and Lemma~\ref{lem:B} give
\[
\sum_{\omega\in\Omega_n}\prod_{j=1}^n h(\omega_j)
= v^{n-1}b^{n-1}
= E^{n-1}(h(2)+h(4)+h(6)+h(8)).
\]
The claim follows immediately.
\end{proof}

This corresponds to the case where the weights assigned to even and odd digits are completely uniform.  
It shows that the asymptotic behavior of the generating function can be described by the logarithm of a simple average value.

Here a natural question arises.  
If we assume that the digit sequence is sufficiently random,  
then the corresponding logarithmic average growth rate is expected to be bounded above by $\log \big(10^{-1}\sum_{j=0}^{9}h(j)\big)$.
This idea starts from the behavior of generating functions under a uniform distribution,  
and naturally generalizes the expectation that such statistical properties approximately extend to the set $\Omega_n$ as well.

The main problem of this note is stated as follows.

\begin{problem}[Best Possible Upper Bound]\label{prob:general}
For a general nonnegative weight function $h$, determine the best possible upper bound.  
We ask whether the following inequality holds:
\[
\limsup_{n\to\infty}\Psi_n(h)
\le
\log \Big(\frac{1}{10}\sum_{j=0}^{9}h(j)\Big).
\]
\end{problem}

\appendix
\section{Conditional Implication under an Upper-Bound Assumption}\label{sec:appendix-linear}

In this appendix, we record a conditional result that follows if a certain natural upper-bound assumption holds for the problem stated in the main text.  
Under this assumption, the digit sum of $2^n$ grows linearly in $n$.  
Although this result is not the main topic of the note, it may serve as an example of how the matrix framework introduced earlier can be applied.
In this appendix,
we fix the weight function by setting \(h(0)=x\ge 1\) and \(h(k)=1\) for \(k\neq 0\). For $\omega\in\Omega_n$, define
\[
N_0(\omega)=\#\{1\le j\le n:\ \omega_j=0\}.
\]
Then we have
\[
\Psi_n(h)
= \frac{1}{n}\log\Bigl(\frac{1}{|\Omega_n|}
\sum_{\omega\in\Omega_n} x^{N_0(\omega)} \Bigr)
= \frac{1}{n}\log\Bigl(\frac{1}{|\Omega_n|} M_1^{[h]}   M_2^{[h]} \cdots M_{n-1}^{[h]}   b^{n-1,[h]} \Bigr).
\]
This is a special case of Problem~\ref{prob:general}, but we restate it as a separate problem.

\begin{problem}[Upper bound for the normalized log generating function]\label{prob:mgf}
We ask whether, for every $x \ge 1$, the following inequality holds:
\begin{equation}\label{eq:premise-limsup}
\limsup_{n\to\infty} \Psi_n(h)
\le \log\Bigl(\frac{x+9}{10}\Bigr).
\end{equation}
\end{problem}

The inequality \eqref{eq:premise-limsup} is a very strong statement.  
Assuming that it holds, we can apply the standard exponential Chebyshev (Chernoff) estimate~\cite[§2.2]{dembo1998}  
to obtain the following result.

\begin{theorem}[Bounded zero proportion under the upper-bound assumption]\label{thm:zero_num}
Assume that Problem~\ref{prob:mgf} holds.  
Then there exist constants $c>0$ and $M\in\mathbb{N}$ such that,  
for every $n \ge M$ and every $\omega \in \Omega_n$,
\[
N_0(\omega) < c n.
\]
\end{theorem}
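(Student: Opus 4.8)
The plan is to run a Chernoff-type (exponential Markov) argument, feeding in the hypothesis of Problem~\ref{prob:mgf} to control the exponential moment $\sum_{\omega\in\Omega_n} x^{N_0(\omega)}$. First I would fix a real number $x>1$ and use the weight $h$ with $h(0)=x$ and $h(k)=1$ for $k\neq 0$, for which the definition of $\Psi_n$ gives the identity
\[
\sum_{\omega\in\Omega_n} x^{N_0(\omega)} = |\Omega_n|\, e^{n\Psi_n(h)}.
\]
For a threshold parameter $c>0$, since $x>1$ the indicator satisfies $\mathbf{1}[N_0(\omega)\ge cn]\le x^{\,N_0(\omega)-cn}$, and summing this over $\omega$ yields the Markov-type bound
\[
\#\{\omega\in\Omega_n : N_0(\omega)\ge cn\}
\;\le\; x^{-cn}\sum_{\omega\in\Omega_n} x^{N_0(\omega)}
\;=\; x^{-cn}\,|\Omega_n|\,e^{n\Psi_n(h)}.
\]

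Next I would pass to normalized logarithms. Using $|\Omega_n|=4\cdot 5^{n-1}$, so that $\tfrac1n\log|\Omega_n|\to\log 5$, together with the assumed bound $\limsup_n\Psi_n(h)\le\log\frac{x+9}{10}$, the convergent term and the constant $-c\log x$ can be pulled out of the $\limsup$, giving
\[
\limsup_{n\to\infty}\frac1n\log \#\{\omega\in\Omega_n : N_0(\omega)\ge cn\}
\;\le\; \log 5 + \log\frac{x+9}{10} - c\log x
\;=\; \log\frac{x+9}{2} - c\log x .
\]
The right-hand side equals $-F(x)$ with $F(x)=c\log x-\log\frac{x+9}{2}$. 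If I can choose $c<1$ and $x>1$ with $F(x)>0$, then this growth rate is \emph{strictly} negative; since $\#\{\omega : N_0(\omega)\ge cn\}$ is a nonnegative integer, a strictly negative exponential rate forces it to be $0$ for all $n$ beyond some $M$. That is precisely the assertion that $N_0(\omega)<cn$ for every $\omega\in\Omega_n$ and every $n\ge M$.

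Finally I would carry out the elementary optimization ensuring $F(x)>0$ is attainable. Maximizing $F$ over $x>0$ gives the critical point $x^\ast=\frac{9c}{1-c}$, at which $x^\ast+9=\frac{9}{1-c}$, and a short computation reduces the optimal value to
\[
F(x^\ast)=(c-1)\log 9 + c\log c + (1-c)\log(1-c)+\log 2 .
\]
As $c\to 1^-$ this tends to $\log 2>0$, so $F(x^\ast)>0$ for every $c<1$ sufficiently close to $1$; fixing such a $c$ and taking $x=x^\ast$ supplies the desired nontrivial constant $c<1$. I expect the main obstacle to lie exactly in this last step: one must verify that $F(x)>0$ can be achieved while keeping $c<1$, i.e.\ that the gain $c\log x$ extracted from the threshold can be made to overcome the combined exponential growth $\log 5$ of $|\Omega_n|$ and the assumed moment rate $\log\frac{x+9}{10}$. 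Everything else is a routine bookkeeping of the $\limsup$ and of the integrality argument that turns a negative rate into an exactly vanishing count.
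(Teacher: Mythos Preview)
Your argument is correct and follows essentially the same route as the paper: an exponential Markov bound with the weight $h(0)=x$, $h(k)=1$ for $k\neq 0$, followed by optimization over $x$ to make the exponential rate strictly negative, and then the integrality argument. The only cosmetic difference is that the paper parametrizes by $t=\log x$, recognizes the optimized quantity as the Cram\'er rate function $I(a)=a\log(10a)+(1-a)\log\frac{10(1-a)}{9}$ for Bernoulli$(1/10)$, and records the sharp threshold $a_0\approx 0.8649$ at which $I(a_0)=\log 5$, whereas you simply observe that $F(x^\ast)\to\log 2>0$ as $c\to 1^-$; your $F(x^\ast)$ is exactly $I(c)-\log 5$, so the two computations agree.
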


\begin{proof}
For $0<a<1$ and $x\ge1$, the Markov inequality gives
\[
\#\{\omega: N_0(\omega)\ge an\}
\le
\sum_{\omega\in\Omega_n} \mathbf{1}_{\{N_0\ge an\}}
\le
x^{-an}\sum_{\omega\in\Omega_n}x^{N_0(\omega)}.
\]
From \eqref{eq:premise-limsup} we obtain
\[
\limsup_{n\to\infty}\frac{1}{n}\log\#\{N_0\ge an\}
\le
\log5 - a\log x + \log\Bigl(\frac{x+9}{10}\Bigr).
\]
Setting $t=\log x$ and optimizing the right-hand side gives
\begin{equation}\label{eq:count-limsup}
\limsup_{n\to\infty}\frac{1}{n}\log\#\{\omega\in\Omega_n: N_0(\omega)\ge an\}
\le
\log5 - I(a),
\end{equation}
where
\[
I(a)=\sup_{t\ge0}\Bigl\{a t - \log\frac{e^{t}+9}{10}\Bigr\}
=
\begin{cases}
0, & 0\le a\le \displaystyle \frac{1}{10},\\[1ex]
\displaystyle a\log(10a)+(1-a)\log \Bigl(\frac{10(1-a)}{9}\Bigr), & \displaystyle \frac{1}{10}\le a<1.
\end{cases}
\]
The function $I(a)$ is continuous and increasing, and satisfies $I(1/10)=0$.  
It coincides with the upper-tail rate function in Cramér’s large deviations principle  
for i.i.d. Bernoulli trials with success probability $p=1/10$.  
There exists a unique $a_0 \approx 0.8649 \in(0,1)$ satisfying $I(a_0)=\log5$.  
For any $c>a_0$, we have $\log5-I(c)<0$.  
Hence, by \eqref{eq:count-limsup}, there exists $M=M(c)$ such that for all $n\ge M$,
\[
\#\{\omega: N_0(\omega)\ge c n\}=0.
\]
Thus, for every $\omega\in\Omega_n$, we have $N_0(\omega)<c n$.
\end{proof}

As a consequence, if Problem~\ref{prob:mgf} holds,  
the decimal digit sum $s(2^n)$ increases linearly in $n$, 
where $s(m)$ denotes the sum of the decimal digits of $m$. 
Indeed, by Theorem~\ref{thm:zero_num}, the proportion of nonzero digits among the last $n$ decimal digits is at least $(1-c)$,  
and hence for $n\ge M$ we have $s(2^n)\ge (1-c)n$.

To obtain linear growth of the digit sum, one may consider different forms of the upper bound on the right-hand side of \eqref{eq:premise-limsup}.
The function $\log((x+9)/10)$ used above is just one example.
The main idea is that there is an envelope function $g(x)$ for $\Psi_n(h)$,
and we can apply a Chernoff-type estimate to it.
For example, the same argument works if we relax the condition and use functions such as
\[
\log\frac{x^{1.1}+9}{9}, \qquad f_3(x)=\log\frac{x^{7.2}+3.224\times10^{9}}{1.157\times10^{9}}.
\]
Here the first expression is given purely as an illustration and will not be used later.
The key point in the Chernoff estimate is that we still get
exponential decay for the number of zeros.
Figure~\ref{fig:premise-check}(\subref{fig:premise-check-b})
 shows the proven upper bound $f_2(x)=\log\bigl((x+4)/5\bigr)$ from Proposition \ref{prop:1norm},
the expected upper bound $f_1(x)=\log\bigl((x+9)/10\bigr)$, and the relaxed example $f_3(x)$.
As shown in Figure~\ref{fig:premise-check}(\subref{fig:premise-check-b}), $A=\{x\in[1,\infty)\mid f_3(x)\le \log((x+4)/5)\}$ is a finite interval.
Here $f_3$ is used only as an auxiliary comparison function;
its constants have no special meaning and are simply chosen so that
$f_1(x)\le f_3(x)$ for all $x>1$, and $f_3(x)\le f_2(x)$ for all $x\in A$.
We now state the following related problem.

\begin{problem}[Relaxed upper bound for the normalized log generating function]\label{prob:limit_interval}
We ask whether, for any $x \in A$, the following inequality holds:
\begin{equation}\label{eq:premise-limsup2}
\limsup_{n\to\infty}\frac{1}{n}\log\Bigl(\frac{1}{|\Omega_n|}
\sum_{\omega\in\Omega_n} x^{N_0(\omega)} \Bigr)
\le f_3(x).
\end{equation}
\end{problem}

If this problem is true, then by Proposition~\ref{prop:1norm} the same bound \eqref{eq:premise-limsup2}  
holds for all $x\ge1$, and the same argument as above  
leads again to the linear growth of the digit sum of $2^n$.

\section{Numerical Computation}

\begin{figure}[t]
  \centering
  \begin{subfigure}[t]{0.48\textwidth}
    \centering
    \includegraphics[width=\textwidth]{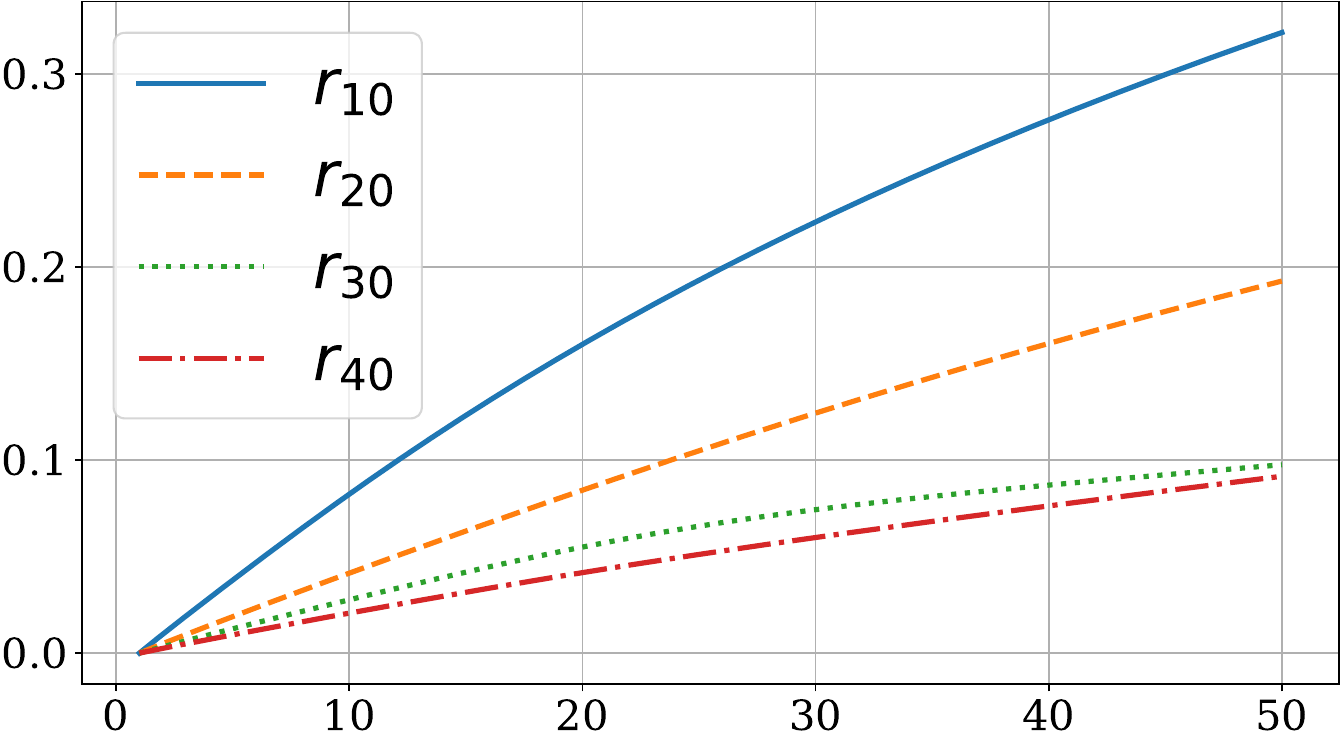}
    \caption{Difference from the expected bound: $r_n(x)$ for $n=10,20,30,40$}
    \label{fig:premise-check-a}
  \end{subfigure}
  \hfill
  \begin{subfigure}[t]{0.48\textwidth}
    \centering
    \includegraphics[width=\textwidth]{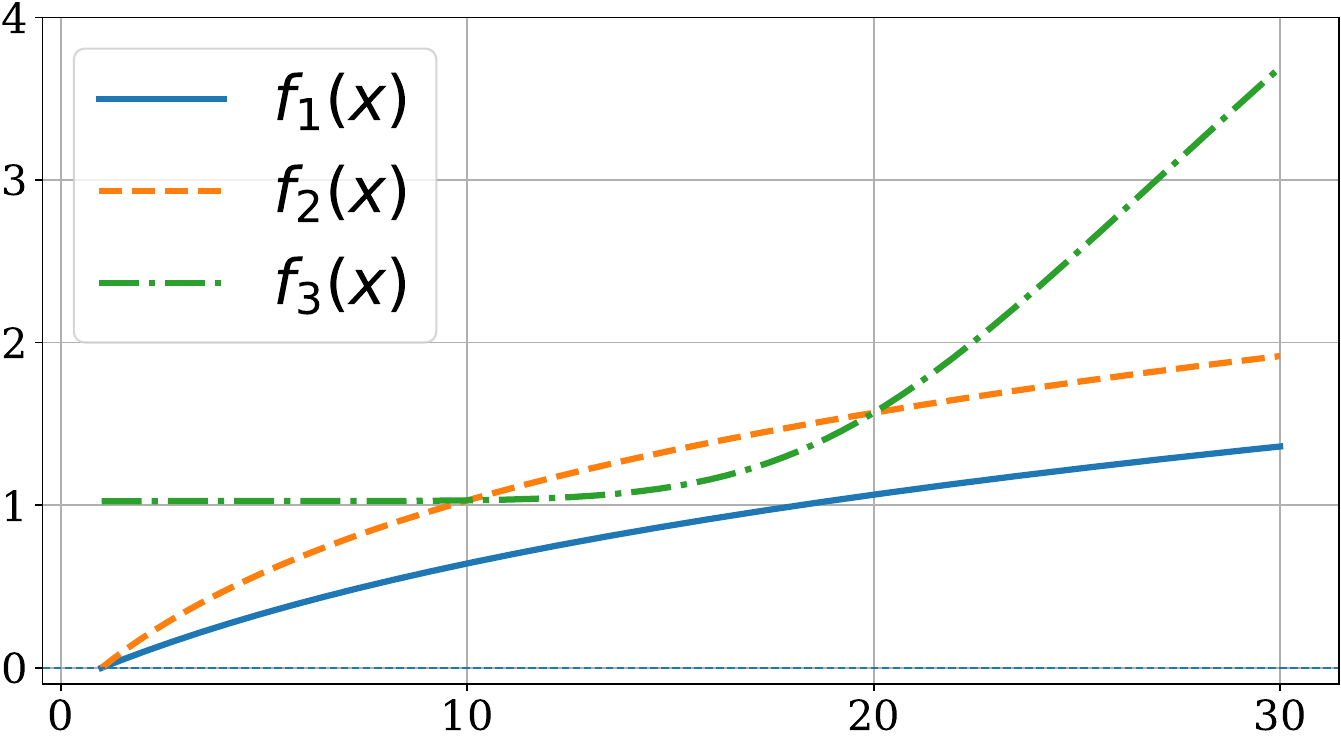}
    \caption{Three candidate upper bounds:
      \(f_1(x)\) (expected),
      \(f_2(x)\) (proved; Proposition~\ref{prop:1norm}),
      and \(f_3(x)\) (relaxed).}
    \label{fig:premise-check-b}
  \end{subfigure}
  \caption{Numerical verification of the validity of Problem~\ref{prob:mgf} and comparison of three possible upper-bound functions.}
  \label{fig:premise-check}
\end{figure}

We perform numerical verification for the validity of \cref{prob:mgf}.  
For $x > 1$, define
\[
r_n(x)
= \log\Bigl(\frac{x+9}{10}\Bigr)
- \frac{1}{n}\log\Bigl(\frac{1}{|\Omega_n|}
\sum_{\omega\in\Omega_n} x^{N_0(\omega)} \Bigr).
\]
Figure~\ref{fig:premise-check}(\subref{fig:premise-check-a})
 shows the numerical results for $n = 10,\, 20,\, 30$ and $40$.  
Since the computational cost of the second term grows rapidly with $n$,  
the computation is limited to relatively small values of $n$.  
Therefore, these results do not provide strong evidence for the validity of \cref{prob:mgf}.  
Within the computable range, we did not observe any violation of the upper bound.

\FloatBarrier

\bibliographystyle{plainurl}
% \bibliography{references}

\end{document}